\ifxetex\usepackage{xltxtra}\fi
\ifluatex\usepackage{realscripts}\fi
\newcommand{\divisionfont}{\relax}
\newcommand{\blocktitlefont}{\relax}
\newcommand{\contentsfont}{\relax}
\newcommand{\xreffont}{\relax}
\newcommand{\divisionnameptx}{\relax}%
\newcommand{\authorsptx}{\relax}%
\NewDocumentEnvironment{sectionptx}{mmmmmm}
{%
\renewcommand{\divisionnameptx}{Section}%
\renewcommand{\authorsptx}{#4}%
\section[{#3}]{#1}%
\label{#6}%
}{}%
\NewDocumentEnvironment{references-section}{mmmmmm}
{%
\renewcommand{\divisionnameptx}{References}%
\renewcommand{\authorsptx}{#4}%
\section[{#3}]{#1}%
\label{#6}%
}{}%
\NewDocumentEnvironment{references-section-numberless}{mmmmmm}
{%
\renewcommand{\divisionnameptx}{References}%
\renewcommand{\authorsptx}{#4}%
\section*{#1}%
\addcontentsline{toc}{section}{#3}
\label{#6}%
}{}%
\titleformat{\part}[display]
{\divisionfont\Huge\bfseries\centering}{\divisionnameptx\space\thepart}{30pt}{\Huge#1}
[{\Large\centering\authorsptx}]
\titleformat{\chapter}[display]
{\divisionfont\huge\bfseries}{\divisionnameptx\space\thechapter}{20pt}{\Huge#1}
[{\Large\authorsptx}]
\titlespacing*{\chapter}{0pt}{50pt}{40pt}
\titleformat{\section}[hang]
{\divisionfont\Large\bfseries}{\thesection}{1ex}{#1}
[{\large\authorsptx}]
\titlespacing*{\section}{0pt}{3.5ex plus 1ex minus .2ex}{2.3ex plus .2ex}
\titleformat{\subsection}[hang]
{\divisionfont\large\bfseries}{\thesubsection}{1ex}{#1}
[{\normalsize\authorsptx}]
\titlespacing*{\subsection}{0pt}{3.25ex plus 1ex minus .2ex}{1.5ex plus .2ex}
\titleformat{\subsubsection}[hang]
{\divisionfont\normalsize\bfseries}{\thesubsubsection}{1em}{#1}
[{\small\authorsptx}]
\titlespacing*{\subsubsection}{0pt}{3.25ex plus 1ex minus .2ex}{1.5ex plus .2ex}
\titleformat{\paragraph}[hang]
{\divisionfont\normalsize\bfseries}{\theparagraph}{1em}{#1}
[{\small\authorsptx}]
\titlespacing*{\paragraph}{0pt}{3.25ex plus 1ex minus .2ex}{1.5em}
\newcommand{\mono}[1]{\texttt{#1}}
\newtcolorbox[auto counter, number within=section]{block}{}
\newtcolorbox[auto counter, number within=section]{project-distinct}{}
\newtcolorbox[auto counter, number within=tcb@cnt@block, number freestyle={\noexpand\thetcb@cnt@block(\noexpand\alph{\tcbcounter})}]{subdisplay}{}
\newtcolorbox[use counter from=block]{theorem}[3]{title={{Theorem~\thetcbcounter\notblank{#1#2}{\space}{}\notblank{#1}{\space#1}{}\notblank{#2}{\space(#2)}{}}}, phantomlabel={#3}, breakable, parbox=false, after={\par}, fontupper=\itshape, theoremstyle, }
\newtcolorbox[use counter from=block]{definition}[2]{title={{Definition~\thetcbcounter\notblank{#1}{\space\space#1}{}}}, phantomlabel={#2}, breakable, parbox=false, after={\par}, definitionstyle, }
\newtcolorbox[use counter from=block]{figureptx}[3]{lower separated=false, before lower={{\textbf{Figure~\thetcbcounter}\space#1}}, phantomlabel={#2}, unbreakable, parbox=false, figureptxstyle, }
\NewDocumentEnvironment{introduction}{m}
{\notblank{#1}{\noindent\textbf{#1}\space}{}}{\par\medskip}
\newtcolorbox{proof}[2]{title={\notblank{#1}{#1}{Proof.}}, phantom={\hypertarget{#2}{}}, breakable, parbox=false, after={\par}, proofstyle }
\numberwithin{equation}{section}
\newlist{referencelist}{description}{4}
\setlist[referencelist]{leftmargin=!,labelwidth=!,labelsep=0ex,itemsep=1.0ex,topsep=1.0ex,partopsep=0pt,parsep=0pt}
\providecommand\phantomsection{}
\NewDocumentEnvironment{sidebyside}{mmmm}
  {\begin{tcbraster}
    [sbsstyle,raster columns=#1,
    raster left skip=#2\linewidth,raster right skip=#3\linewidth,raster column skip=#4\linewidth]}
  {\end{tcbraster}}
\title{``Pass the Buck'' on a Rooted Tree}
\author{Kenneth Levasseur\\
Department of Mathematical Sciences\\
University of Massachusetts Lowell\\
Lowell, Massachusetts, USA\\
\href{mailto:kenneth_levasseur@uml.edu}{\nolinkurl{kenneth_levasseur@uml.edu}}
}
\date{January 6, 2021}
\begin{document}
\hypertarget{x:article:passthebuck}{}
\maketitle
\thispagestyle{empty}
\begin{abstract}
The Stochastic Abacus is can employed to compute winning probabilities for each vertex of a rooted tree in the game ``Pass the Buck'', with the starting vertex being the root of the tree. For all but the simplest trees, the abacus can't really be implemented due to the large number of steps needed for completion.  In this paper, a technique for anticipating the outcome is introduced.%
\end{abstract}
\begin{introduction}{Introduction.}%
In the 1970's, Engel \hyperlink{x:biblio:biblio-engel}{[{\xreffont 2}]} devised the Stochastic Abacus as a way to compute probabilities for certain discrete probability problems with minimal numerical computation.  More recently, Torrence \hyperlink{x:biblio:biblio-torrence}{[{\xreffont 7}]} used the same technique to determine winning probabilities for players in the game ``Pass the Buck'' for a variety of families of graphs.  The Stochastic Abacus has found more widespread exposure due to a recent article by Propp \hyperlink{x:biblio:biblio-propp}{[{\xreffont 5}]} in Math Horizons. The author initially applied the abacus to the game on a complete binary tree with the root as the starting vertex. In \hyperlink{x:biblio:biblio-levasseur}{[{\xreffont 4}]} this game was analyzed for complete binary trees, making use of the symmetry of these trees at all levels. One such tree is \hyperref[x:figure:fig-rooted-complete]{Figure~{\xreffont\ref{x:figure:fig-rooted-complete}}}. In this note, we use similar logic to describe how the game an arbitrary rooted tree can be analyzed, making it possible to anticipate the outcome for much more complex trees.  One such example is the rooted tree in \hyperref[x:figure:fig-rooted-random]{Figure~{\xreffont\ref{x:figure:fig-rooted-random}}}. Note that the roots of all rooted trees are drawn here with roots on the top.%
\begin{sidebyside}{2}{0}{0}{0}%
\begin{sbspanel}{0.5}%
\begin{figureptx}{A complete binary tree to level three}{x:figure:fig-rooted-complete}{}%
\includegraphics[width=\linewidth]{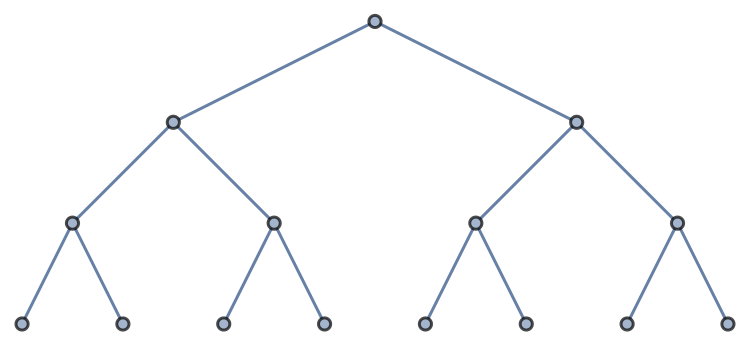}
\tcblower
\end{figureptx}%
\end{sbspanel}%
\begin{sbspanel}{0.5}%
\begin{figureptx}{A random rooted tree}{x:figure:fig-rooted-random}{}%
\includegraphics[width=\linewidth]{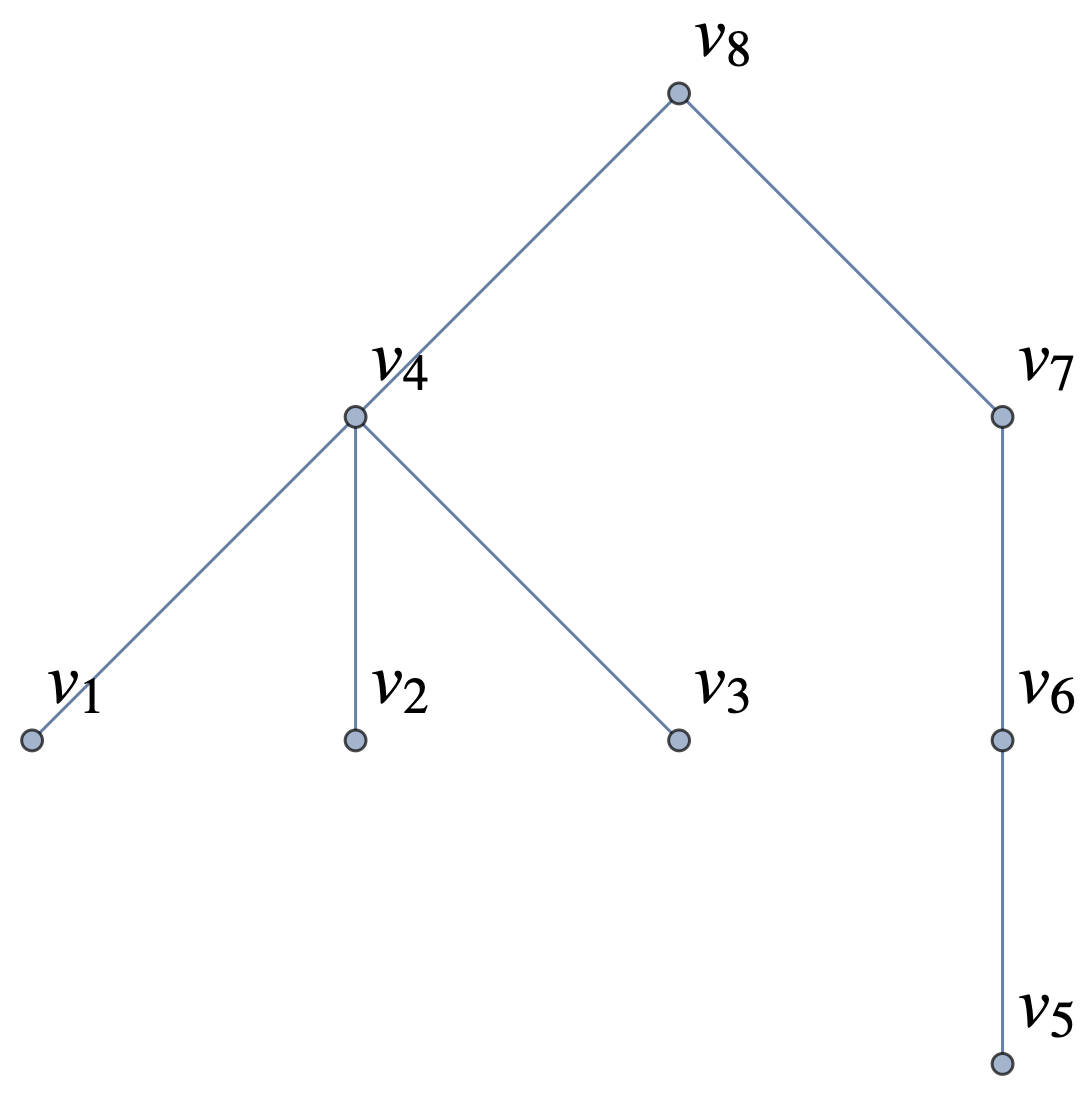}
\tcblower
\end{figureptx}%
\end{sbspanel}%
\end{sidebyside}%
\end{introduction}%
\typeout{************************************************}
\typeout{Section 1 Pass the Buck}
\typeout{************************************************}
\begin{sectionptx}{Pass the Buck}{}{Pass the Buck}{}{}{x:section:section-pass-the-buck}
The game ``Pass the Buck'' is played on a connected undirected graph, with a distinguished ``start vertex.'' The game proceeds in steps starting with the start vertex holding a prize (the ``buck'').  At every stage in the game, the current vertex that holds the buck and its neighboring vertices are selected randomly and uniformly. If the the current vertex is selected, the game ends with that vertex winning. If a neighboring vertex is selected the buck is passed there and process is repeated. More precisely, if the degree of the vertex that holds the buck is \(k\), then the buck moves to any of the neighbors with probability \(\frac{1}{k+1}\) and the game ends with the player at the current vertex winning with probability \(\frac{1}{k+1}\).%
\end{sectionptx}
\typeout{************************************************}
\typeout{Section 2 The Stochastic Abacus}
\typeout{************************************************}
\begin{sectionptx}{The Stochastic Abacus}{}{The Stochastic Abacus}{}{}{x:section:section-stochastic-abacus}
\begin{figureptx}{Final out come of the abacus on the random rooted tree}{x:figure:fig-abacus-random-tree}{}%
\begin{image}{0.2}{0.6}{0.2}%
\includegraphics[width=\linewidth]{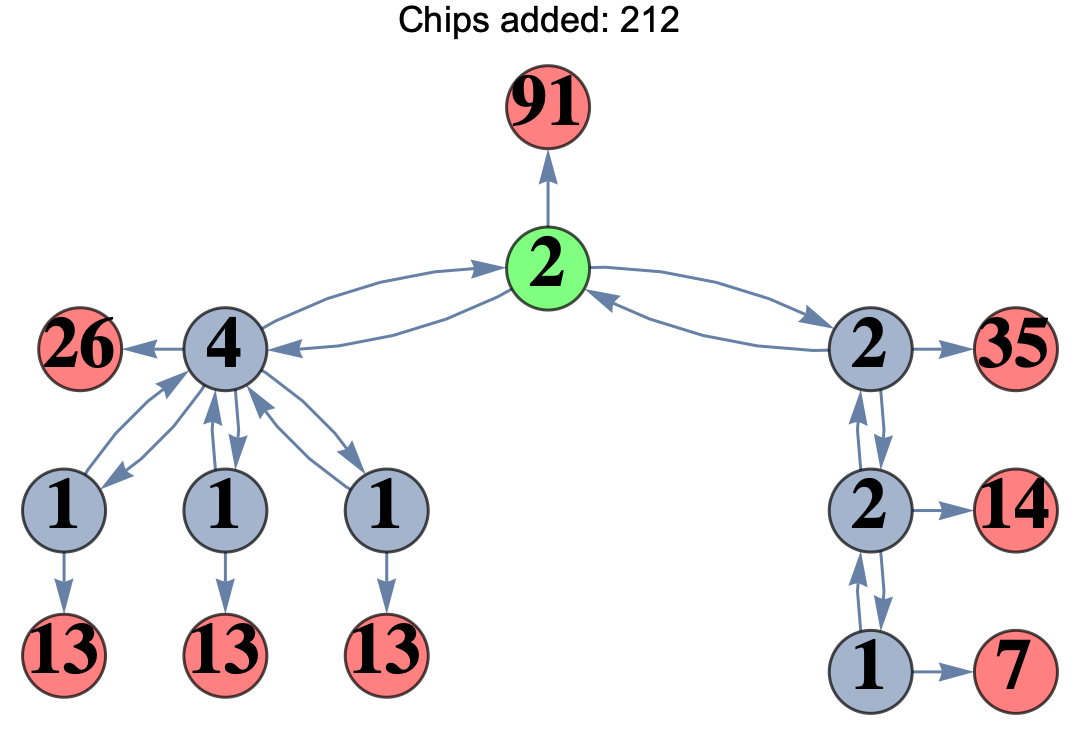}
\end{image}%
\tcblower
\end{figureptx}%
Engle's Stochastic Abacus is a chip firing algorithm that, when complete, allows computation of winning probabilities for Pass the Buck. The final outcome for the random rooted tree above is shown in the augmented directed graph, \hyperref[x:figure:fig-abacus-random-tree]{Figure~{\xreffont\ref{x:figure:fig-abacus-random-tree}}}.  The green vertex is the root of the tree, and the gray vertices are the other vertices of the tree. These vertices are labeled with the number of chips that are initially loaded into each vertex, one less than the outdegree of each of these vertices in the augmented graph you see here.  The pink vertices are absorbing vertices, one for each of the vertices in the tree. They accumulate chips in the implementation of the stochastic abacus. A total of 212 chips were added to the abacus after its initial critical loading, at which point the critial loading levels have been reached once more. This means that the root, whose chip count in its absorbing vertex is 91, has win probability \(\frac{91}{212}\). %
\par
A tree of this size is just about on the border of the sizes for which the abacus can reasonably completed manually. There are programs that can implement the abacus -  this is how the outcome above was actually computed - they are limited. Relatively simple trees with periods into the hundreds of thousands or more quickly put restrictions on this approach. For example, the stochastic abacus deposits over 64 million chips into the root of the tree in \hyperref[x:figure:fig-larger-tree]{Figure~{\xreffont\ref{x:figure:fig-larger-tree}}}.%
\begin{figureptx}{A slightly larger tree with high restoration number}{x:figure:fig-larger-tree}{}%
\begin{image}{0.2}{0.6}{0.2}%
\includegraphics[width=\linewidth]{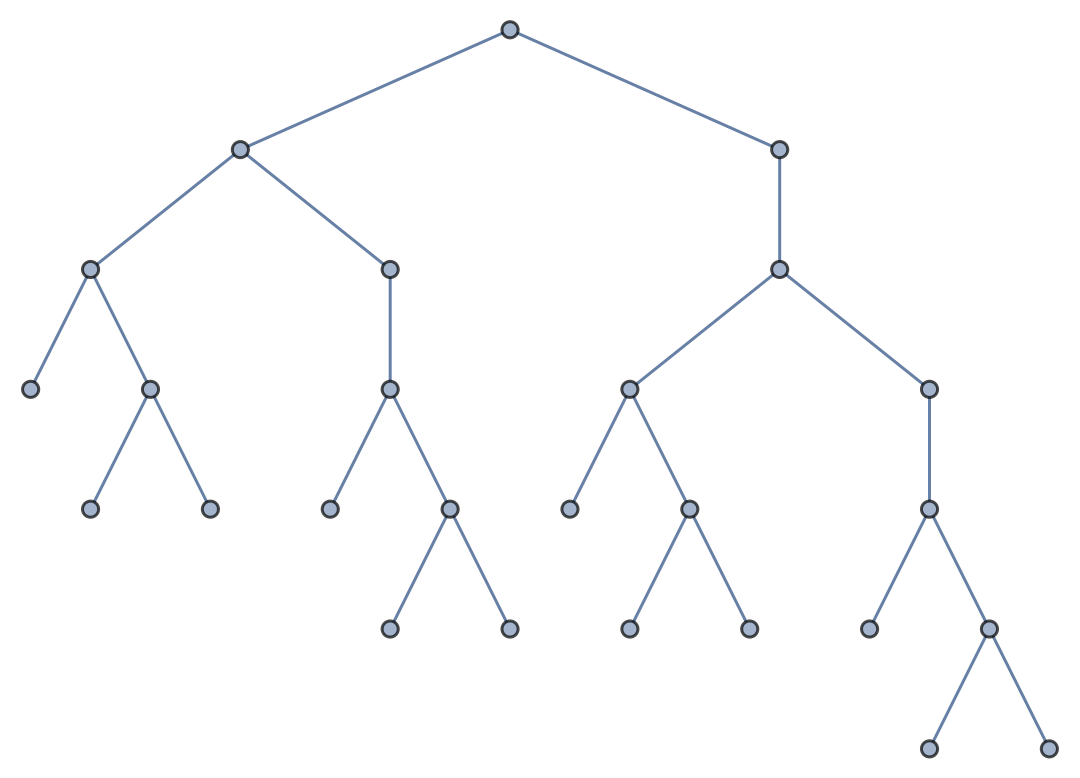}
\end{image}%
\tcblower
\end{figureptx}%
\end{sectionptx}
\typeout{************************************************}
\typeout{Section 3 Direct Computation of the Abacus}
\typeout{************************************************}
\begin{sectionptx}{Direct Computation of the Abacus}{}{Direct Computation of the Abacus}{}{}{x:section:s-direct-computation}
 Next we describe how results can be computed much more easily. The rooted tree with root \(r\) and subtrees \(T_1\), \(T_2\), ... \(T_m\) is denoted \(\text{RootedTree}\left(r,T_1, T_2, \ldots ,T_m\right)\). \begin{definition}{Restoration Number.}{g:definition:idm354135319136}%
The restoration number of a rooted tree is number of times the root needs to fire to return the stochastic abacus to its critical loading position, denoted \(R(T)\).%
\end{definition}
The restoration number is also the number of chips in the root's absorbing vertex upon return to critical loading.%
\begin{definition}{Restoration Function.}{g:definition:idm354135317040}%
The restoration function of a rooted tree \(T\) is the function \(R_T\) on the vertex set of the tree such that \(R_T(v)\) is the number of chips in \(v\)'s absorbing vertex upon return to critical loading.%
\end{definition}
Note: The probability vertex \(v\) wins Pass the Buck on a rooted tree \(T\) is%
\begin{equation*}
\frac{R_T(v)}{\sum _{w\in V_T} R_T(w)}.
\end{equation*}
\begin{definition}{Period of a Rooted Tree.}{g:definition:idm354135313120}%
The Period of a rooted tree, \(T\), is the restoration number of the rooted tree having \(T\) as it's only subtree. The period of a tree \(T\) is denoted \(P(T)\).%
\end{definition}
Note: If \(T'= \text{RootedTree}(v;T)\), then \(R_{T'}(v)=P(T)\)%
\begin{theorem}{}{}{g:theorem:idm354135309440}%
If \(T\) has root \(v\) and has \(m\) subtrees with roots \(v_1, v_2, \ldots  v_m\), then%
\begin{equation}
P(T)=(m+2)R_T(v)-\sum_{i=1}^m R_T\left(v_i\right).\label{x:men:eq-0}
\end{equation}
\end{theorem}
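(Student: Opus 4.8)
The plan is to introduce the rooted tree \(T'=\text{RootedTree}(r,T)\) obtained by hanging \(T\) below a fresh root \(r\), so that by the definition of the period and the remark preceding the theorem, \(P(T)=R(T')=R_{T'}(r)\). I would then read the formula off the chip flow of Engel's abacus run on \(T'\) through one restoration cycle, using two ingredients: a local conservation law at the root of \(T\), and the fact that hanging a new root above \(T\) leaves the restoration numbers of the vertices of \(T\) untouched.

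For the conservation law, note that inside \(T'\) the root \(v\) of \(T\) has one edge to \(r\), one edge to each of the subtree roots \(v_1,\dots,v_m\), and one edge to its absorbing vertex, so its out-degree in the augmented graph is \(m+2\). Over one restoration cycle of the abacus on \(T'\), the vertex \(v\) receives exactly one chip from \(r\) per firing of \(r\) — that is \(R_{T'}(r)=P(T)\) chips — and one chip from \(v_i\) per firing of \(v_i\), that is \(R_{T'}(v_i)\) chips; meanwhile each of its own \(R_{T'}(v)\) firings expels \(m+2\) chips. Since the configuration agrees at the start and the end of the cycle (critical loading), inflow equals outflow at \(v\):
\[
P(T)+\sum_{i=1}^m R_{T'}(v_i)=(m+2)\,R_{T'}(v).
\]

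The main step is then to show \(R_{T'}(w)=R_T(w)\) for every \(w\in V_T\); substituting this into the displayed identity gives exactly \(P(T)=(m+2)R_T(v)-\sum_{i=1}^m R_T(v_i)\). Because, within any one tree, the restoration function is the primitive integer vector proportional to the vector of Pass-the-Buck winning probabilities, it suffices to prove that the winning probabilities in \(T'\) (buck started at \(r\)), restricted to \(V_T\), are proportional to the winning probabilities in \(T\) (buck started at \(v\)). I would establish this by a renewal analysis at \(v\): from \(v\) the buck makes a sequence of rounds, in each of which it either wins at \(v\), or moves up to \(r\) (and then ends the game at \(r\) with probability \(\tfrac12\) or returns to \(v\)), or enters some \(T_i\) and then either is absorbed inside \(T_i\) at a vertex \(w\) with a probability \(q_{i,w}\) depending only on \(T_i\), or returns to \(v\) with a probability \(\rho_i\) depending only on \(T_i\). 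Summing the geometric series over rounds, each of \(\pi^{T}_w\) and \(\pi^{T'}_w\) comes out as a tree-dependent constant times the common quantity \(\widetilde q_w\) (equal to \(1\) for \(w=v\) and to \(q_{i,w}\) for \(w\in V_{T_i}\)); the two constants differ, but the factor \(\widetilde q_w\) is identical in both trees, because the subtree-local quantities do not see what lies above \(v\) and because the uniform Pass-the-Buck rule makes the ``win at \(v\)'' and ``move to \(v_i\)'' branch weights at \(v\) all equal. Proportionality on \(V_T\) follows.

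The step I expect to be the real obstacle is this second one. Setting up the renewal argument so that the excursions up to \(r\) and down into the \(T_i\) are disentangled cleanly takes care, and one must also verify that proportionality of the probability vectors forces equality of the associated restoration vectors — which uses that the abacus returns to critical loading after the minimum possible number of chip additions, so that each restoration vector is primitive. An alternative that sidesteps probabilities entirely is to argue directly that, during one restoration cycle of the abacus on \(T'\), the activity induced on the subtree \(T\) is precisely one restoration cycle of the stand-alone abacus on \(T\); this is attractive, but it needs a careful ``buffer'' accounting of the single extra chip that \(v\) must carry on account of its edge to \(r\).
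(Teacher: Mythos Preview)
Your skeleton is exactly the paper's: form \(T'=\text{RootedTree}(r,T)\), then do a chip balance at \(v\) over one restoration cycle of \(T'\). The conservation identity you write is the paper's counting argument, just phrased as inflow\(=\)outflow rather than as ``chips \(v\) needs minus chips supplied by the \(v_i\).''

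Where you diverge is in justifying \(R_{T'}(w)=R_T(w)\) for \(w\in V_T\). The paper dispatches this in one sentence by the direct abacus route you list as your alternative: it simply observes that \(w\) (your \(r\)) reaches critical loading exactly when the subtree \(T\) does, so one restoration cycle of \(T'\) induces one restoration cycle's worth of firings inside \(T\). Your primary proposal---proportionality of winning probabilities via a renewal decomposition at \(v\), then primitivity---is correct in spirit but heavier than necessary, and it has a loose end you do not quite close: primitivity of \(R_T\) gives \(R_{T'}\!\restriction_{V_T}=c\cdot R_T\) for some positive integer \(c\), but you still need \(c=1\), and primitivity of the \emph{full} vector \(R_{T'}\) does not by itself force its restriction to \(V_T\) to be primitive. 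The direct abacus argument sidesteps this entirely, and is what the paper actually uses; your renewal analysis, by contrast, would generalize beyond trees and beyond the uniform transition rule, which the paper's one-line argument does not.

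In short: same architecture, but your main route to the key lemma is a detour. Your ``alternative'' is the paper's actual argument; promote it and the primitivity worry disappears.
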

\begin{proof}{}{g:proof:idm354135306688}
If we create a tree with root \(w\) having \(T\) as it's only subtree, the critical loading condition at  \(w\) occurs whenever \(T\) is critically loaded, in which case \(w\) can get sufficient chips without further firing. Therefore, we need to count how many chips, \(v\) needs. In order for \(v\) to fire, it needs to receive \(m+2\) chips each time, where \(m+2\) is the outdegree of \(v\) in the stochastic abacus.  The number that it needs,  \((m+2)R_T(v)\), does not all come from \(w\), however.   This number is decreased by one every time any of the roots of the subtrees of \(v\) fire, which accounts for the sum that is subtracted in \hyperref[x:men:eq-0]{({\xreffont\ref{x:men:eq-0}})}.\end{proof}
The significance of the period of a tree is that  when several subtrees combine with a root, the restoration number of the new rooted tree is a function of the periods of its children.%
\begin{theorem}{}{}{g:theorem:idm354135300224}%
Let \(T' = \text{RootedTree}\left(v,T_1, T_2, \ldots ,T_m\right)\), then%
\begin{equation}
R_{T'}(v) = \text{lcm}\left(P\left(T_1\right),
P\left(T_2\right), \ldots ,P\left(T_{m }\right)\right),\label{x:men:eq-1}
\end{equation}
and for each vertex \(w\) in \(T_k\),%
\begin{equation}
R_{T'}(w)= \frac{R_{T'}(v)}{P\left(T_k\right)}R_{T_k}(w)\text{.}\label{x:men:eq-2}
\end{equation}
\end{theorem}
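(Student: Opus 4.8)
The plan is to read everything off from \emph{chip conservation} over one restoration cycle. If the abacus on a rooted tree \(T\) is run from the critical loading until it first returns to the critical loading, then each transient vertex finishes with the number of chips it started with, so for every vertex the chips it receives balance the chips it emits. A vertex \(w\) of degree \(d_w\) in \(T\) has outdegree \(d_w+1\) in the augmented graph, fires \(R_T(w)\) times, and receives one chip from each neighbour \(y\) per firing of \(y\); hence
\begin{equation*}
\sum_{y\sim w}R_T(y)=(d_w+1)\,R_T(w)
\end{equation*}
at every non-root \(w\) (at the root there is one extra summand, the number of chips fed in). Equivalently \((L+I)R_T\) is zero in every coordinate except the root's, where \(L\) is the graph Laplacian of \(T\). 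Since \(L\) is positive semidefinite, \(L+I\) is invertible, so deleting the root's row leaves an \((|V_T|-1)\times|V_T|\) matrix of rank \(|V_T|-1\), whose kernel is a line; \(R_T\) lies on this line and is strictly positive (every vertex wins with positive probability), and because restoration is the \emph{first} return to the critical loading, \(R_T\) is the primitive lattice generator, so \(\gcd_w R_T(w)=1\). This characterization of \(R_T\) is what I would use.

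Now apply it to \(T'=\text{RootedTree}(v,T_1,\ldots,T_m)\), in which every vertex of every \(T_k\) is a non-root vertex. A vertex \(w\in T_k\) with \(w\neq v_k\) has the same neighbours and degree in \(T'\) as in \(T_k\), so its conservation equation is literally the same; hence the conservation equations of \(T'\) at the vertices of \(T_k\) other than \(v_k\) coincide with those defining \(R_{T_k}\) away from its root, and as these cut out a one-dimensional space, \(R_{T'}\) restricted to \(T_k\) is a scalar multiple \(\lambda_k R_{T_k}\), with \(\lambda_k\in\mathbb{Z}_{>0}\) because \(\gcd_w R_{T_k}(w)=1\). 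The one equation that changes is the one at \(v_k\): its degree in \(T'\) is one larger than in \(T_k\), so it reads \(R_{T'}(v)+\sum_i R_{T'}(v_{k,i})=(m_k+2)R_{T'}(v_k)\), where \(m_k\) is the number of subtrees of \(T_k\). Substituting \(R_{T'}(v_{k,i})=\lambda_kR_{T_k}(v_{k,i})\) and \(R_{T'}(v_k)=\lambda_kR_{T_k}(v_k)\) and invoking \eqref{x:men:eq-0} for \(T_k\), this collapses to
\begin{equation*}
R_{T'}(v)=\lambda_k\,P(T_k).
\end{equation*}

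Thus \(R_{T'}(v)\) is divisible by \(P(T_k)\) for every \(k\), hence is a common multiple of \(P(T_1),\ldots,P(T_m)\); conversely, setting \(R_{T'}(v)=\text{lcm}(P(T_1),\ldots,P(T_m))\), \(\lambda_k=R_{T'}(v)/P(T_k)\in\mathbb{Z}_{>0}\), and \(R_{T'}(w)=\lambda_kR_{T_k}(w)\) on each \(T_k\) yields a positive integral solution of all the conservation equations of \(T'\), and \(R_{T'}\) is entrywise the smallest such. Therefore \(R_{T'}(v)=\text{lcm}(P(T_1),\ldots,P(T_m))\), which is \eqref{x:men:eq-1}, and then for \(w\in T_k\) we get \(R_{T'}(w)=\lambda_kR_{T_k}(w)=\dfrac{R_{T'}(v)}{P(T_k)}R_{T_k}(w)\), which is \eqref{x:men:eq-2}. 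The step I expect to be the real obstacle is the last claim of the first paragraph — that the first return to critical loading realizes the \emph{primitive} firing vector and not an integer multiple of it; this is precisely where the abelian (confluence) property of the abacus and the fact that the critical loading is a recurrent configuration have to be used with care. Once that is in hand, the rest is bookkeeping around \eqref{x:men:eq-0}.
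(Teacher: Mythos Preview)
Your argument is correct and takes a genuinely different route from the paper's. The paper reasons dynamically: each firing of the root \(v\) delivers exactly one chip to every subtree, so \(T_k\) evolves independently of the other subtrees and exactly as it would beneath a single parent; hence \(T_k\) is back at its critical loading precisely at the multiples of \(P(T_k)\) among the firing counts of \(v\), the first simultaneous return occurs at the \(\text{lcm}\), and counting how many periods each subtree has completed yields \eqref{x:men:eq-2}. That argument reads ``first return'' directly off the abacus dynamics and never needs to know that \(R_T\) is the primitive lattice point on its line. Your approach instead characterizes \(R_T\) algebraically as that primitive vector in \(\ker\bigl((L+I)\big|_{\text{non-root rows}}\bigr)\), then uses the block structure of this system on \(T'\) together with \eqref{x:men:eq-0} to force \(R_{T'}(v)=\lambda_k P(T_k)\) for each \(k\); this makes the linear-algebraic skeleton explicit and would port more readily to other graph families, at the price of the primitivity lemma you flag. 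Your instinct about that lemma is exactly right: it is the one place where the dynamics must enter, since the algebra by itself only pins \(R_{T'}(v)\) down to a positive multiple of the \(\text{lcm}\), and it is precisely the abelian property together with the recurrence of the critical configuration that rules out a nontrivial multiple.
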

\begin{proof}{}{g:proof:idm354135298752}
As the stochastic abacus is running, each subtree \(T_i\) reaches its own critical loading condition after \(D\left(T_i\right)\) root firings. Therefore, critical loading of all subtrees is first reached after the least common multiple of their periods, \hyperref[x:men:eq-1]{({\xreffont\ref{x:men:eq-1}})}. For each of the subtrees, the number of chips deposited in a period is multiplied by the number of periods that the subtree goes though, which accounts for \hyperref[x:men:eq-2]{({\xreffont\ref{x:men:eq-2}})}%
\end{proof}
This lets us determine the restoration function of any rooted tree from the bottom up. We illustrate the technique with the tree, \(\mathcal{T}\) in \hyperref[x:figure:fig-rooted-random]{Figure~{\xreffont\ref{x:figure:fig-rooted-random}}}.  Let \(\epsilon\left(v_i\right)\) be the trivial tree with a single vertex, \(v_i\), its root.  We know that \(R\left(\epsilon\left(v_i\right)\right)=1\), and \(P\left(\epsilon \left(v_i\right)\right) = 2\), \(i=1,2,3.\) Therefore the level 2 tree with three trivial subtrees, \(\tau_1=\text{RootedTree}\left(v;\epsilon \left(v_1\right),\epsilon \left(v_2\right),\epsilon \left(v_3\right)\right)\) has restoration value \(R\left(\tau_1\right)=2\). The period of \(\tau_1\) is \(P\left(\tau_1\right)= (3+2)R(v_4)-3 R_{\tau_1}(\epsilon ) = 7\). On the right side of the tree, we have the subtree \(\tau_2=\text{RootedTree}\left(v_6;\text{RootedTree}\left(v_5;\epsilon \left(v_4\right)\right)\right.\). We can determine the restoration function and period of this tree: \(R_{\tau_2}\left(v_4\right)=1\), \(R_{\tau_2}\left(v_5\right)=2\), \(R_{\tau_2}\left(v_6\right)= 5\), and \(P\left(\tau_2\right) = 13\).%
\par
Finally we can compute the restoration function of \(\mathcal{T}\):%
\begin{equation*}
R_{\mathcal{T}}\left(v_7\right)=\text{lcm}(7,13)=91.
\end{equation*}
We complete the computation of \(R_{\mathcal{T}}\) by multiplying \(R_{\tau_1}\) by 13 and \(R_{\tau_2}\) by 7. The final result agrees with the actual implementation of the stochastic abacus that was displayed in \hyperref[x:figure:fig-abacus-random-tree]{Figure~{\xreffont\ref{x:figure:fig-abacus-random-tree}}}.%
\end{sectionptx}
\typeout{************************************************}
\typeout{Section 4 Implementation of the Direct Calculation}
\typeout{************************************************}
\begin{sectionptx}{Implementation of the Direct Calculation}{}{Implementation of the Direct Calculation}{}{}{x:section:s-implementation}
In order to implement the process describe above, we use an array representation of rooted trees. We number the vertices in a tree with \(n\) vertices with the positive integers from \(1\) to \(n\).  The structure of the tree is encapsulated in an array of \(n\) integers, \mono{T}.%
\par
In general, the entry \mono{T[k]} contains the parent of vertex \mono{k}.  The root of the tree has no parent and if \mono{k} is the root, \mono{T[k]=0}. If we number the vertices in \hyperref[x:figure:fig-rooted-random]{Figure~{\xreffont\ref{x:figure:fig-rooted-random}}} by the subscripts of the vertex names, the tree would be represented by the array%
\begin{equation*}
(4, 4, 4, 8, 6, 7, 8, 0).
\end{equation*}
\par
The following Mathematica code will identify various parts of a rooted tree, assuming the structure we have described above.%
\par\medskip%
\noindent\phantomsection\label{x:fragment:frag-parts-of-tree}\textlangle{}1 \textrangle{} \(\equiv\)\index{frag-parts-of-tree@frag-parts-of-tree}\\
\begin{preformatted}
root[T_] := FirstPosition[T, 0] // First
children[T_, k_] := Position[T, k] // Flatten
leafQ[T_, j_] := Not[MemberQ[T, j]]
descendants[T_, j_] := {} /; leafQ[T, j]
descendants[T_, j_] := 
 Join[children[T, j], 
      Join @@ Map[descendants[T, #]&, children[T, j]]] /; Not[leafQ[T, j]]
\end{preformatted}
The following functions converts a tree in the form of undirected edges with designated root into the array form we use in our implementation.%
\par\medskip%
\noindent\phantomsection\label{x:fragment:frag-conversion}\textlangle{}2 \textrangle{} \(\equiv\)\index{frag-conversion@frag-conversion}\\
\begin{preformatted}
treeArray[el_List, root_] := 
 	If[AcyclicGraphQ[Graph[el]] && ConnectedGraphQ[Graph[el]], 
 		 maketreeArray[el, root], "error"]

maketreeArray[el_List, root_] := 
 Module[{ta, n}, n = Length[el] + 1; ta = Table[0, {n}]; 
  Map[FindShortestPath[el, root, #]&, Complement[Range[n], {root}]] //
           Map[Partition[#, 2, 1]&, #]& // 
           Flatten[#, 1]& // 
           Union //
           Map[(ta[[#[[2]]]] = #[[1]])&, #]&; 
	ta]
\end{preformatted}
This function computes the restoration function a tree in the form of a list of undirected edges with designated root. The expression \mono{r[T,k,j]} represents the restoration function of the subtree within \mono{T} rooted at \mono{k} evaluated for the vertex \mono{j}; and \mono{p[T,k]} is the period of the subtree of \mono{T} rooted at \mono{k}.%
\par\medskip%
\noindent\phantomsection\label{x:fragment:frag-abacus-calcs}\textlangle{}3 \textrangle{} \(\equiv\)\index{frag-abacus-calcs@frag-abacus-calcs}\\
\begin{preformatted}
restoration[tree_, root_] := 
 Module[{r, p, ta, n}, 
 	ta = treeArray[tree, root]; 
  	n = Length[tree] + 1; r[T_, k_, k_] := 1 /; leafQ[T, k];
  	p[T_, k_] := 2 /; leafQ[T, k];
  	r[T_, k_, k_] := 
   	LCM @@ Map[p[T, #]&, children[T, k]] /; Not[leafQ[T, k]];
  	p[T_, k_] := 
        p[T, k] = (2 + Length[children[T, k]]) r[T, k, k] - 
                     Total[Map[r[T, k, #]&, children[T, k]]];
  	r[T_, k_, j_] := 
   	r[T, k, j] = 
    		Module[{i}, 
     				i = (Select[children[T, k], 
         			MemberQ[Join[{#}, descendants[T, #]], j]&]) // First; 
     				r[T, k, k] r[T, i, j]/p[T, i]]; 
  	Map[{#, r[ta, root, #]}&, Range[n]] ]
\end{preformatted}
Here, we test the code with the example of \hyperref[x:figure:fig-rooted-random]{Figure~{\xreffont\ref{x:figure:fig-rooted-random}}} and see that it is consistant with the output of the abacus seen in \hyperref[x:figure:fig-abacus-random-tree]{Figure~{\xreffont\ref{x:figure:fig-abacus-random-tree}}}.%
\par\medskip%
\noindent\phantomsection\label{x:fragment:frag-test1}\textlangle{}4 \textrangle{} \(\equiv\)\index{frag-test1@frag-test1}\\
\begin{preformatted}
test1 = {UndirectedEdge[8, 4], UndirectedEdge[8, 7], UndirectedEdge[4, 1],
         UndirectedEdge[4, 2], UndirectedEdge[4, 3], UndirectedEdge[7, 6], 
         UndirectedEdge[6, 5]};
restoration[test1,8]

{{1,13},{2,13},{3,13},{4,26},{5,7},{6,14},{7,35},{8,91}}
\end{preformatted}
The tree in \hyperref[x:figure:fig-larger-tree]{Figure~{\xreffont\ref{x:figure:fig-larger-tree}}} is more complex, yet also not huge, but its restoration number is considerably larger than the previous example.%
\par\medskip%
\noindent\phantomsection\label{x:fragment:frag-test2}\textlangle{}5 \textrangle{} \(\equiv\)\index{frag-test2@frag-test2}\\
\begin{preformatted}
test2 =Map[UndirectedEdge@@#&, 
	{{26, 22}, {25, 24}, {25, 23}, {25, 22}, {22, 15}, {21, 17}, 
 	{20, 19}, {20, 18}, {20, 17}, {17, 16}, {16, 15}, {15, 1},  
 	{14, 10}, {13, 12}, {13, 11}, {13, 10}, {10, 3}, {9, 5}, {8, 7}, 
 	{8, 6}, {8, 5}, {5, 4}, {4, 3}, {3, 2}, {2, 1}}]
restoration[test2,1]

{{1, 27783522}, {2, 10297681}, {3, 3109521}, {4, 1158449}, {5, 365826}, 
{6, 60971}, {7, 60971}, {8, 121942}, {9, 182913}, {10, 981954},
{11, 163659}, {12, 163659}, {13, 327318}, {14, 490977}, {15, 8389602},
{16, 3125538}, {17, 987012}, {18, 164502}, {19, 164502}, {20, 329004},
{21, 493506}, {22, 2649348}, {23, 441558}, {24, 441558},
{25, 883116}, {26, 1324674}}
\end{preformatted}
\end{sectionptx}
\typeout{************************************************}
\typeout{References  References}
\typeout{************************************************}
\begin{references-section-numberless}{References}{}{References}{}{}{g:references:idm354135260816}
\begin{referencelist}
\bibitem[1]{x:biblio:biblio-bjoner}\hypertarget{x:biblio:biblio-bjoner}{}Bjöner, A., Lovasz, L., Shor, P. (1991), \textit{Chip-firing games on graphs}, Eur. J. Combin. \textbf{12 (4)}, 283\textendash{}291, doi.org\slash{}10.1016\slash{}s0195-6698(13)80111-4.
\bibitem[2]{x:biblio:biblio-engel}\hypertarget{x:biblio:biblio-engel}{}Arthur Engel (1976), \textit{Why does the probabilistic abacus work?}, Educational Studies in Mathematics \textbf{7}, 59\textendash{}69.
\bibitem[3]{x:biblio:biblio-kemeny}\hypertarget{x:biblio:biblio-kemeny}{}John G. Kemeny and J. Laurie Snell, \textit{Finite Markov Chains}, Undergraduate Texts in Mathematics, Springer- Verlag, New York, 1976.
\bibitem[4]{x:biblio:biblio-levasseur}\hypertarget{x:biblio:biblio-levasseur}{}Levasseur, K. (2021), \textit{Pass the Buck on a Complete Binary Tree}, Mathematics Magazine, to appear.
\bibitem[5]{x:biblio:biblio-propp}\hypertarget{x:biblio:biblio-propp}{}Propp, J. (2018), \textit{Prof.  Engel’s  marvelously  improbable  machines}, Math Horizons, 26(2):  5–9. doi.org\slash{}10.1080\slash{}10724117.2018.1518840.
\bibitem[6]{x:biblio:biblio-snell}\hypertarget{x:biblio:biblio-snell}{}J. Laurie Snell, \textit{The Engel algorithm for absorbing Markov chains}, Available at https:\slash{}\slash{}arxiv.org\slash{}abs\slash{}0904.1413v1
\bibitem[7]{x:biblio:biblio-torrence}\hypertarget{x:biblio:biblio-torrence}{}Bruce Torrence, \textit{Passing the Buck and Firing Fibonacci: Adventures with the Stochastic Abacus}, The American Mathematical Monthly, May 2019, \textbf{126} no.\@\,5, 387\textendash{}399, doi.org\slash{}10.1080\slash{}00029890.2019.1577089.
\end{referencelist}
\end{references-section-numberless}
\end{document}